\documentclass{ifacconf}

\usepackage{mathtools}
\usepackage{graphicx}      
\usepackage{natbib}        
\usepackage{amsmath,amssymb,amsfonts}
\usepackage{mathrsfs}
\usepackage{graphicx}
\usepackage{textcomp}
\usepackage{xcolor}
 \usepackage{enumerate}

\usepackage{algorithm}
\usepackage{algorithmic}
\newtheorem{assumption}{Assumption}
\newtheorem{theorem}{Theorem}
\newtheorem{lemma}{Lemma}
\newtheorem{remark}{Remark}
\newtheorem{corollary}{Corollary}

\newcommand{\ADD}[1]{\textcolor{black}{{#1}}}

\begin{document}
\begin{frontmatter}

\title{Affine-coupled Distributed  Optimization via Distributed Proximal Jacobian ADMM with Quantized Communication
} 

\thanks{$^\dagger$ Corresponding author}
\thanks[footnoteinfo]{The work of X.D. and A.I.R. was supported by the Guangzhou-HKUST(GZ) Joint Funding Scheme (Grant No. 2025A03J3960). The work of A.I.R. was also supported by the Guangdong Provincial Project (Grant No. 2024QN11G109).}

\author[First]{Xu Du}
\author[First]{Boyu Han}
\author[Second]{Ivano Notarnicola}
\author[Third]{Karl~H.~Johansson} 
\author[First,Forth]{Apostolos I. Rikos$^\dagger$}

\address[First]{The Artificial Intelligence Thrust of the Information Hub, The Hong Kong University of Science and Technology (Guangzhou), Guangzhou, China, (e-mail: \{michaelxudu,boyuhan,apostolosr\}@hkust-gz.edu.cn).}
\address[Second]{Department of Electrical, Electronic, and Information Engineering, University of Bologna, Italy, (e-mail: ivano.notarnicola@unibo.it)}
\address[Third]{The Division of Decision and Control Systems, KTH Royal Institute of Technology, SE-100 44 Stockholm, Sweden, (e-mail: kallej@kth.se)}
\address[Forth]{The Department of Computer Science and Engineering, The Hong Kong University of Science and Technology, Clear Water Bay, Hong Kong, China.}

\begin{abstract}        
This paper investigates distributed resource allocation optimization over directed graphs with limited communication bandwidth. We develop a novel distributed algorithm that integrates the centralized Proximal Jacobian Alternating Direction Method of Multipliers (PJ-ADMM) with a finite-level quantized consensus scheme, enabling nodes to cooperatively solve the optimization {\ADD{in a distributed fashion.}} 
Under the assumption of convex objective functions, we establish that the proposed algorithm achieves sublinear convergence to a neighborhood of the optimal solution, with the convergence accuracy explicitly bounded by the quantization {\ADD{level}}. Numerical experiments validate that the algorithm achieves competitive performance compared to existing approaches while {\ADD{exhibiting}} communication efficiency.
\end{abstract}

\begin{keyword}
Distributed Optimization, PJ-ADMM, Quantized Communication, Finite-time Consensus 
\end{keyword}

\end{frontmatter}

\section{Introduction}

Distributed optimization has gained significant attention with the rise of federated learning \cite{ren2025communication}, robotics and model predictive control (MPC) \cite{stomberg2025decentralized}, power grids \cite{Du2019} and wireless communication \cite{yang2024low}. These applications highlight that, as data volumes grow and optimization variables scale, solving large-scale problems on centralized devices becomes impractical (\cite{2024_doostmohammadian_rikos_Johansson_survey}).

Distributed optimization enables large-scale problems with high-dimensional variables to be partitioned across multiple devices, where each device solves a local subproblem and exchanges information until the global solution is obtained. Broadly, distributed optimization methods fall into two categories \cite{ling2015dlm}: (i) primal decomposition and (ii) dual decomposition. In general, dual decomposition methods exhibit faster convergence and higher accuracy compared to primal approaches \cite{ling2015dlm}. In this work, we focus on dual decomposition, with particular emphasis on the Alternating Direction Method of Multipliers (ADMM) (\cite{boyd2011distributed}) for resource allocation problems.

\noindent
\textbf{Existing Literature.}
ADMM was originally introduced in \cite{Glowinski1975, Gabay1976}. For comprehensive surveys and recent advances in the field, we refer readers to \cite{boyd2011distributed, lin2022alternating}. However, in the context of resource allocation problems, \cite{he2015full} demonstrated that the Jacobian variant of ADMM does not always guarantee convergence. To overcome this limitation, subsequent research has evolved along four primary directions: 
\begin{itemize}
\item \textbf{Dual consensus ADMM}, see, e.g., \cite{chang2016proximal,jiang2022distributed};
\item \textbf{Variable Splitting ADMM}, such as \cite{wang2013solving, notarnicola2022passivity} and  \cite[Algorithm~1]{houska2016augmented};
\item \textbf{Tracking-based ADMM over networks}, e.g., \cite{falsone2020tracking};
\item \textbf{Proximal Jacobian ADMM with step-size modification}, including proximal regularization variants \cite{he2016proximal, he2015full, he2020optimal, yang2022proximal, deng2017parallel,chen2020online, choi2025linear,  shen2022multi,li2018augmented}, and accelerated distributed augmented Lagrangian (ADAL)-type methods \cite{chatzipanagiotis2017convergence,chatzipanagiotis2015augmented}.
\end{itemize}

However, except for \cite{falsone2020tracking}, which considers fully distributed information exchange over networks, the aforementioned approaches suffer from two main limitations: (i) their reliance on centralized coordination mechanisms, which constrains scalability in distributed systems, and (ii) the need for nodes to transmit real-valued messages, imposing significant bandwidth requirements and creating a scalability bottleneck. To the best of our knowledge, for Consensus ADMM-type algorithms, \cite{rikos2023asynchronous} and \cite{Du2025CDCA} leverage the network-based quantized consensus algorithm proposed in \cite{rikos2022non} to design fully distributed quantized schemes. However, extending such analysis to resource allocation type ADMM algorithms remains largely unexplored. The main difficulty lies in the fact that resource allocation problems involve affine coupling constraints rather than simple consensus constraints, which breaks the symmetry required by most quantized consensus protocols and thus prevents the direct application of network-based quantized consensus algorithms.

\textbf{Main Contributions.} 
Motivated by the aforementioned challenges, we propose a novel distributed optimization algorithm based on the centralized Proximal Jacobian Alternating Direction Method of Multipliers (PJ-ADMM) framework \cite{deng2017parallel}, which to the best of our knowledge represents the first integration of PJ-ADMM achieving 
(i) fully distributed operation, (ii) communication over directed graphs, and (iii) quantized information exchange among nodes.
 Drawing inspiration from \cite{rikos2023asynchronous} and \cite{Du2025CDCA}), our approach employs a two-layer architecture that decouples optimization steps from distributed averaging, enabling faster convergence compared to single-layer methods. Specifically, we develop the Quantized distributed Proximal Jacobian ADMM (QDPJ-ADMM) detailed in Algorithm~\ref{alg: distributed Jacobi-Proximal ADMM}, where the inner layer implements the quantized consensus scheme from \cite{rikos2023asynchronous} (Algorithm~\ref{alg:QuAS}) to manage communication through quantized {\ADD{message exchanges}}, while the outer layer handles primal-dual variable updates. Theoretical analysis in Theorem~\ref{theorem} establishes sublinear convergence to a neighborhood of the optimal solution for convex  objectives, with numerical experiments confirming convergence accuracy dependent on {\ADD{the utilized quantization level}} due to exclusive use of quantized data for inter-subproblem communication.

\section{Notation and Preliminaries}\label{sec: Notation}

\textbf{Notation.} 
The symbols $\mathbb{R}$, $\mathbb{Q}$, $\mathbb{Z}$, and $\mathbb{N}$ denote the sets of real, rational, integer, and natural numbers, respectively. Matrices and vectors are denoted by uppercase (e.g., $A$) and lowercase (e.g., $a$) letters, respectively. The transpose of a matrix $A \in \mathbb{R}^{n \times n}$ or a vector $a \in \mathbb{R}^{n}$ is written as $A^\top$ or $a^\top$. For a scalar $a \in \mathbb{R}$, $\lfloor a \rfloor$ and $\lceil a \rceil$ represent the floor and ceiling functions, respectively. These operations apply element-wise to vectors $a \in \mathbb{R}^n$, yielding $\lfloor a \rfloor, \lceil a \rceil \in \mathbb{R}^n$. The vector of all ones is denoted by $\mathbf{1}$, and $I$ is the identity matrix of appropriate dimension. The Euclidean norm of a vector $a$ is written as $|a|$. The value of a variable $x$ at node $i$ and iteration $k$ is denoted by $x_i^{[k]}$.
Furthermore, $|\mathcal{S}|$ denotes the cardinality of a set $\mathcal{S}$ (e.g., $|\mathcal{V}| = N$, as introduced subsequently).
The notation $a \mid b$ indicates that $b \in \mathbb{R}^n$ is the dual variable associated with the constraint $a \in \mathbb{R}^n$.

\textbf{Graph Theory.} 
The communication network is modeled as a directed graph (digraph) $\mathcal{G} = (\mathcal{V}, \mathcal{E})$, where $\mathcal{V} = {1, \dots, N}$ ($N \geq 2$) denotes the set of agents (nodes). The edge set $\mathcal{E} \subseteq \mathcal{V} \times \mathcal{V}$ includes a virtual self-edge $(i,i)$ for every node $i \in \mathcal{V}$. A directed edge from node $i$ to $j$ is denoted $e_{ji} = (j,i) \in \mathcal{E}$. The set of in-neighbors of node $i$, denoted $\mathcal{N}_i^{[k]} = \{ j\in \mathcal{V} \mid (i,j)\in \mathcal{E}\}$, consists of nodes that can transmit information directly to $i$, with its cardinality $\mathcal{D}_i^{[k]} = |\mathcal{N}_i^{[k]}|$ representing the in-degree. Similarly, the out-neighbor set $\mathcal{N}_i^{[k+1]} = \{ l \in \mathcal{V} \mid (l,i) \in \mathcal{E}\}$ contains nodes that directly receive information from $i$, and its cardinality $\mathcal{D}_i^{[k+1]} = |\mathcal{N}_i^{[k+1]}|$ defines the out-degree. The diameter $D$ of $\mathcal{G}$ is the length of the longest shortest directed path between any node pair $i, j \in \mathcal{V}$, and the digraph is strongly connected if a directed path exists between every pair of nodes $i, j \in \mathcal{V}$.

\noindent
\textbf{Quantization.} 
Quantization plays a key role in digital communication networks by lowering bandwidth requirements and increasing communication efficiency. Its representation of signals with a finite number of bits enables the implementation of error-correcting codes such as Reed–Solomon or LDPC, thereby allowing significant gains in robustness against interference \cite{proakis2002communication}. While asymmetric, uniform, and logarithmic quantizers are all well-documented \cite{2019:Wei_Johansson}, the analysis in this paper employs an asymmetric mid-rise quantizer with infinite range. The quantizer is defined as
\begin{equation}\label{eq: quantizer}\small
q^a_{\Delta}(b) \coloneqq \left\lfloor \frac{b}{\Delta} \right\rfloor,
\end{equation}
for input vector $b \in \mathbb{R}^n$ and quantization level $\Delta \in \mathbb{Q}$, with the superscript $a$ indicating its asymmetric nature. It is noted that our results hold for other quantizer types as well.

\section{Problem Formulation and Centralized PJ-ADMM}\label{sec: Problem}

\subsection{Resource Allocation Optimization}\label{sec: Problem formulation}
Consider a communication network represented by a directed graph (digraph) $\mathcal G= (\mathcal V, \mathcal E)$ with  $N = |\mathcal V|$ nodes. 
We consider the following resource allocation problem defined over this network 
\begin{equation}\label{eq: DO}\small
	\begin{split} 
		\min_{x_i\in \mathbb R^{n_i}, \;\; i\in \mathcal V}\;\; &\sum_{i=1}^{N}f_i(x_i)\\
        \mathrm{s.t.}\qquad\;\; &\sum_{i=1}^{N}A_ix_i = b, 
	\end{split}
\end{equation}
where each node $i\in \mathcal V$ is associated with a local objective $f_i: \mathbb R^{n_i} \rightarrow \mathbb R$. The local decision variables $x_i\in \mathbb R^{n_i}$ are affinely coupled through the global equality constraint $\sum_{i=1}^{N}A_ix_i = b\in \mathbb R^m$, where  $A_i \in \mathbb R^{m\times n_i}$ and $b\in \mathbb R^m$.

Now let us assume that communication over the network is subject to limited bandwidth.
In order to solve problem \eqref{eq: DO} while ensuring communication efficiency,
we reformulate it as a quantized resource allocation  optimization problem 
\begin{equation}\label{eq: reformulation}\small
	\begin{split}
\min_{x_i\in \mathbb R^{n_i}, \;\; i\in \mathcal V}\;\; &\sum_{i=1}^{N}f_i(x_i)\\
\mathrm{s.t.}\qquad\;\;
         &\sum_{i=1}^{N}A_ix_i = b,\\
        &\text{nodes communicate with quantized values,}
	\end{split}
\end{equation}
where inter-node communications are restricted to quantized messages due to limited bandwidth.

\subsection{Centralized PJ-ADMM}\label{sec: ADMM}
The Lagrangian of problem \eqref{eq: DO} is defined as
\begin{equation}\label{eq: lagrangian}\small
\begin{split}
    \mathcal L(x,\lambda) \coloneqq& \sum_{i=1}^N f_i(x_i) + \lambda^\top \left( \sum_{i=1}^N A_i x_i -b\right),
\end{split}
\end{equation}
where $x = [x_1^\top, x_2^\top, \cdots, x_N^\top]^\top$ collects all local decision variables, and $\lambda \in \mathbb{R}^m$ denotes the dual multiplier associated with the coupling constraint. The corresponding augmented Lagrangian is given by,
\begin{equation}\label{eq: aug lagrangian}\small
\begin{split}
     \mathcal L_\rho(x,\lambda) \coloneqq& \sum_{i=1}^N f_i(x_i) + \lambda^\top \left( \sum_{i=1}^N A_i x_i -b\right)+ \frac{\rho}{2}\left\|\sum_{i=1}^N A_i x_i -b \right\|^2,
\end{split}
\end{equation}
where $\rho>0$ denotes the penalty parameter.

Focusing on the augmented Lagrangian in \eqref{eq: aug lagrangian}, the centralized PJ-ADMM for solving problem \eqref{eq: DO} was proposed in \cite{deng2017parallel}.
The centralized PJ-ADMM iteration consists of the following two steps:
\begin{equation}\label{eq: Proximal Jacobian ADMM}\small
\left\{
    \begin{aligned}
       & x_i^{[k+1]}=\mathop{\arg\min}_{x_i} f_i(x_i)+ \frac{1}{2} \left\|x_i-x_i^{[k]} \right\|^2_{P_i}\\
       &    \qquad\qquad+\frac{\rho}{2}\left\|A_ix_i+\sum_{j\neq i}^N A_jx_j^{[k]} -b +\frac{\lambda^{[k]}}{\rho} \right\|^2, \quad\forall i\in \mathcal V, \\
      & \lambda^{[k+1]} =  \lambda^{[k]} + \gamma\rho \left( \sum_{i=1}^N A_ix_i^{[k+1]} -b \right),
    \end{aligned}\right.
\end{equation}
where $P_i$ satisfies $P_i\succ \rho \left( \frac{N}{2-\gamma} -1 \right)\left\|A_i\right\|^2$ and $0<\gamma<2$, see \cite[Lemma 2.2]{deng2017parallel}.
In the first step of \eqref{eq: Proximal Jacobian ADMM}, each node minimizes the augmented Lagrangian with an additional proximal regularization term $\frac{1}{2} \left\|x_i-x_i^{[k]} \right\|^2_{P_i}$
with respect to its local variable $x_i$. In the second step, the dual variable $\lambda$ is updated. For convex local objectives  $f_i, \forall i\in \mathcal V$, the PJ-ADMM algorithm admits global sublinear convergence guarantees, as shown in \cite[Section 2]{deng2017parallel}.  

It is important to emphasize that the centralized PJ-ADMM formulation presented in \eqref{eq: Proximal Jacobian ADMM} still depends critically on a centralized coordination mechanism, as the dual variable update necessitates the aggregation of global information from all agents. Such centralized dependency may hinder scalability and robustness in large-scale networked systems. To address this limitation, the next section introduces a fully distributed variant that eliminates the need for a central coordinator. Compared with PJ-ADMM, the proposed algorithm possesses two distinct features:
(i) each node interacts only with its local neighbors while collaboratively solving the global problem, and
(ii) communication efficiency is enhanced through the incorporation of quantized information exchange mechanisms.



\section{Distributed Proximal Jacobian
ADMM with Efficient Communication}\label{sec: algorithm}

In this section, we propose a novel algorithm
to solve problem \eqref{eq: reformulation} in a fully distributed manner. 
Before presenting the algorithmic details, we introduce several standard assumptions that will be essential for the subsequent analysis.

\begin{assumption}\label{ass:1}
The communication network is represented by a \textit{strongly connected} directed graph $\mathcal{G} = (\mathcal{V}, \mathcal{E})$.
Each node $i \in \mathcal{V}$ is assumed to know the network diameter $D$ and a common quantization level $\Delta$.
\end{assumption}

\begin{assumption}\label{ass:2}
For each node $i \in \mathcal{V}$, the local objective function $f_i: \mathbb{R}^n \to \mathbb{R}$ is closed, proper, and convex.
In particular, for all $x_\alpha, x_\beta \in \mathbb{R}^n$, the following inequality holds:
\begin{equation}\label{eq: convex}\small
f_i(x_\alpha) + \partial f_i(x_\alpha)^\top (x_\beta - x_\alpha) \le f_i(x_\beta),
\end{equation}
where $\partial f_i(x_\alpha)$ denotes a sub-gradient of $f_i$ at the point $x_\alpha$. Moreover, the solution set of problem \eqref{eq: reformulation} is assumed to be non-empty.
\end{assumption}

Assumption~\ref{ass:1} provides a necessary condition for the convergence of Algorithm~\ref{alg:QuAS} \cite{rikos2023distributed2}, which serves as the coordination step of Algorithm~\ref{alg: distributed Jacobi-Proximal ADMM} as described in \eqref{eq: global variable}. Moreover, the digraph diameter $D$ can be computed by the nodes in a distributed manner within finite time using a protocol for graph diameter calculation, e.g., \cite{oliva2016distributed}.
Assumption \ref{ass:2} ensures the convexity of each local objective, enabling the establishment of global convergence results for Algorithm \ref{alg: distributed Jacobi-Proximal ADMM}.
Furthermore, Assumption \ref{ass:2} guarantees the existence of an optimal solution for problem \eqref{eq: reformulation}.
 
\subsection{Algorithm Development}\label{sec: algorithm structure}
In this section, we present our proposed distributed
algorithm, detailed below as Algorithm \ref{alg: distributed Jacobi-Proximal ADMM}.

To develop the distributed algorithm, we first introduce an auxiliary variable $d^{[k]}$ defined as
\begin{equation} \label{eq: constraint update}\small
  d^{[k]} \coloneqq \sum_{i=1}^{N} A_i x_i^{[k]} - b.  
\end{equation}
The quantity $d^{[k]}$ represents the global residual of the affine coupled constraints in \eqref{eq: reformulation}, which remains constant in a centralized setting. In the fully distributed implementation, it will be replaced by a locally maintained estimate $\hat d_i^{[k]} $ (see \eqref{eq: global variable}), which is dynamically updated through local communication among neighboring agents.

Algorithm \ref{alg: distributed Jacobi-Proximal ADMM} is structured into three main phases: i) local optimization, ii) coordination among nodes, iii) dual variable update.  In the first phase, each node $i\in\mathcal V$ minimizes its own local augmented Lagrangian function, see \eqref{eq: local uodate}. In the second phase, all nodes exchange the information $\phi_i^{[k+1]} = N(A_i x_i^{[k+1]} - b)$ via the distributed quantized averaging Algorithm~\ref{alg:QuAS}, thereby obtaining the updated auxiliary variable estimation $\hat d_i^{[k+1]}$ as described in \eqref{eq: global variable}. In the third phase, the dual variable $\hat \lambda_i^{[k+1]}$ is updated and subsequently used in the next iteration’s local optimization step, see \eqref{eq: dual of Jacobian ADMM}. Note that  $\hat \lambda_i^{[k+1]}$ can be interpreted as a local copy of the global dual variable $\lambda^{[k+1]}$ in \eqref{eq: Proximal Jacobian ADMM} for each agent $i$. These three phases are repeated iteratively until convergence.

Algorithm~\ref{alg:QuAS}, first proposed in \cite{rikos2022non}, consists of three main components: (i) quantization, (ii) averaging, and (iii) a stopping criterion. Firstly, each node initializes its local information as $\phi_i^{[k+1]}$ and introduces $\chi_i$ to denote the quantized representation of $\phi_i^{[k+1]}$. Secondly, in the averaging step, $\chi_i$ is split into $\xi_i$ equal integer pieces (some pieces may differ by one). Each node retains the piece with the smallest value and transmits the remaining $\xi_i - 1$ pieces to randomly selected out-neighbors $l \in \mathcal N_i^{[k+1]}$ or to itself. Simultaneously, it receives pieces $c_j$ from all in-neighbors $j \in \mathcal N^{[k]}_i$ and updates $\chi_i$ and $\xi_i$ as in \eqref{eq: local information avg}. Thirdly, the algorithm performs max- and min-consensus operations every $D$ time steps. If the difference between the max-consensus $M_i$ and min-consensus $m_i$ satisfies $\left\|M_i-m_i\right\|_{\infty}\leq 1$, then each node $i$
scales its solution according to the quantization level to compute $\hat z_i^{[k+1]}$. Once the stopping condition is satisfied, the solution is scaled accordingly. At this point, Algorithm~\ref{alg:QuAS} terminates, and each node $i$ proceeds to the second phase of Algorithm~\ref{alg: distributed Jacobi-Proximal ADMM}.
The convergence analysis of Algorithm~\ref{alg:QuAS} is provided in \cite[Theorem 1]{rikos2022non}.

\begin{algorithm}[ht]
		\small
	\caption{QDPJ-ADMM: Quantized Distributed Proximal Jacobian ADMM}
		\textbf{Input.} Strongly connected digraph $\mathcal{G} = (\mathcal{V}, \mathcal{E})$, parameter $\rho$, network diameter $D$, quantization level $\Delta$, for each node $i \in \mathcal{V}$. 
    Each node $i \in \mathcal V$ has a local cost function $f_i$. 
    Assumptions~\ref{ass:1} and \ref{ass:2} hold. 
    \\
    \textbf{Initialization.} Randomly chosen dual variable $\hat \lambda_i \in \mathbb{R}^n$, and global variable estimation $\hat d_i \in \mathbb{R}^m$, for each node $i \in \mathcal{V}$. 
    \\
    \textbf{Iteration.}
	\begin{enumerate}
		\item[1.] Paralleled solve local NLP:
		\begin{equation}\label{eq: local uodate}
        \begin{aligned}
        	x_i^{[k+1]}=&\mathop{\arg\min}_{x_i} f_i(x_i)+ \frac{1}{2} \left\|x_i-x_i^{[k]} \right\|^2_{P_i}  \\
            &\quad+\frac{\rho}{2}\left\|A_ix_i-A_ix_i^{[k]}+\hat d_i^{[k]} + \frac{\hat\lambda_i^{[k]}}{\rho} \right\|^2.     
        \end{aligned}
		\end{equation}
		\item[2.] Calculate the global variable estimation executing the distributed following algorithm until convergence:
        \begin{equation}\label{eq: global variable}
            \hat d_i^{[k+1]} = \text{Algorithm \ref{alg:QuAS}}\left(\phi_i^{[k+1]}, D,\Delta\right),
        \end{equation} 
where $\phi_i^{[k+1]} =N(A_i x_i^{[k+1]} -b)$.
		
		\item[3.] Update the dual:
		\begin{equation}\label{eq: dual of Jacobian ADMM}
			\hat \lambda_i^{[k+1]} = \hat \lambda_i^{[k]} + \gamma\rho \hat d_i^{[k+1]}.
		\end{equation}
	\end{enumerate}
    \textbf{Output.} Each node $i$ calculates $x_i^*$ that solves problem \eqref{eq: reformulation}.
	\label{alg: distributed Jacobi-Proximal ADMM}
\end{algorithm}

\begin{algorithm}[ht]
	\small
	\caption{DFQAC: Distributed Finite-time Quantized Average Consensus}
	\textbf{Input.} $\phi_i^{[k+1]}=N(A_i x_i^{[k+1]} -b), D, \Delta$. \\
    \textbf{Initialization.} Each node $i \in \mathcal{V}:$
    \begin{enumerate}
	\item[1.] Assigns probability 
    \begin{equation}
    p_{li}=\left\{
   \begin{aligned}
       &\frac{1}{1+\mathcal D_i^{[k+1]}},\quad& \text{if}\; l\in \mathcal N_i^{[k+1]}\cup \{i\},\\
       &0,\quad & \text{if}\; l\notin \mathcal N_i^{[k+1]}\cup \{i\},
   \end{aligned}
      \right.   
    \end{equation} 
    to each out-neighbor of node $i$.
    \item[2.] Sets $\xi_i = 2$, $\chi_i= 2 q^a_{\Delta}(\phi_i^{[k+1]})$ (see \eqref{eq: quantizer}).
	\end{enumerate}
	\textbf{Iteration.} For time steps $t=1,2,\cdots$ each node $i \in \mathcal V$ does: 
	\begin{enumerate}
	\item[1.] \textbf{If} $t\;\text{mod}(D) = 1$, sets $M_i = \left\lceil \frac{\chi_i}{\xi_i} \right\rceil$ and $m_i=\left\lfloor \frac{\chi_i}{\xi_i} \right\rfloor$.\vspace{2mm}

\item[2.] 
Broadcasts $M_i, m_i$ to each out-neighbor $l \in \mathcal N_i^{[k+1]}$ and receives $M_j, m_j$ from each in-neighbor $j \in \mathcal N_i^{[k]}$. 
Then, sets $M_i = \text{max}_{j \in \mathcal N_i^{[k]} \cup \{i\}}\; M_j$, $m_i = \text{min}_{j \in \mathcal N_i^{[k]}\cup \{i\}}\; m_j$. \vspace{2mm}
\item[3.] Sets $\tau_i = \xi_i$. \vspace{2mm}
\item[4.] \textbf{While} $\tau_i>1$ \textbf{do}
\begin{enumerate}
    \item $c_i=\left\lfloor \frac{\chi_i}{\xi_i} \right\rfloor$. \vspace{2mm}
    \item Sets $\chi_i=\chi_i - c_i$, $\xi_i=\xi_i-1$, $\tau_i=\tau-1$. 
    \item Transmits $c_i$ to randomly chosen out-neighbor $l \in \mathcal N_i^{[k+1]}\cup \{i\}$ with probability $p_{li}$. 
    \item Receives $c_i$ from $j\in \mathcal N_i^{[k]}$ and updates
    \begin{subequations}\label{eq: local information avg}
    \begin{align}
    \chi_i^{[t+1]} &= \chi_i^{[t]} + \sum_{j\in \mathcal N_i^{[k]} }w_{ij}^{[t]} c_j^{[t]}, \\
    \xi_i^{[t+1]} &= \xi_i^{[t]} + \sum_{j\in \mathcal N_i^{[k]}} w_{ij}^{[t]}.
        \end{align}
    \end{subequations}    
    Here $w_{ij}^{[t]} = 1$ if node $i$ receives $c_j^{[t]}$ from node $j$ at step $t$. 
    Otherwise $w_{ij}^{[t]}=0$ and node $i$ does not receive information from node $j$. 
\end{enumerate}
\item[5.] \textbf{if} $t\; \text{mod}\; (D)=0$ and $\left\|M_i-m_i\right\|_{\infty}\leq 1$, set $\hat d_i^{[k+1]} = m_i \Delta$, and stop the operation of the algorithm. 
	\end{enumerate}
    \textbf{Output.} $\hat d_i^{[k+1]}$.
	\label{alg:QuAS}
\end{algorithm}

\subsection{Convergence Analysis}\label{sec: convergence}
In this section, we provide the convergence analysis of
Algorithm \ref{alg: distributed Jacobi-Proximal ADMM}. First, we introduce a lemma that is
important for our analysis. Then, we prove our main result
via a theorem.

\begin{lemma} 
The update of the local estimate $\hat d_i$ for each node $i \in \mathcal{V}$ is given by \eqref{eq: global variable} in Algorithm~\ref{alg: distributed Jacobi-Proximal ADMM}. We note that the operation of Algorithm~\ref{alg:QuAS}, as a consequence of quantized communication, satisfies the inequality,
    \begin{equation} \label{eq: errorbound}\small
    \left\{
    \begin{split} 
        &\hat d_i^{[k+1]} = \frac{1}{N} \sum_{j=1}^N \Delta \left \lfloor \frac{\phi_j^{[k+1]}}{\Delta} \right \rfloor + \kappa_j, \\
		&\left\|\hat d_i^{[k+1]} - d^{[k+1]} \right\|\leq 2\sqrt{m}\Delta,
    \end{split}\right. 
\end{equation}
where $\left\| \kappa_i \right\|_{\infty}\leq \Delta$, $\phi_i^{[k+1]} = N(A_i x_i^{[k+1]} -b), \forall i\in\mathcal V$ and $d^{[k+1]}$ is defined in \eqref{eq: constraint update}.
\end{lemma}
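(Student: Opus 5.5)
The plan is to take the convergence guarantee of Algorithm~\ref{alg:QuAS} from \cite[Theorem 1]{rikos2022non} as a black box, and then bound the two error sources separately: quantization, and the rounding introduced by the finite-time averaging.

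\textbf{Step 1: what Algorithm~\ref{alg:QuAS} computes.} Each node starts with $\chi_i = 2q^a_\Delta(\phi_i^{[k+1]})$ and $\xi_i=2$. The token exchanges in \eqref{eq: local information avg} preserve both $\sum_i \chi_i$ and $\sum_i \xi_i$, so the integer average
\begin{equation*}
\bar q = \frac{1}{N}\sum_{j} q^a_\Delta(\phi_j^{[k+1]})
\end{equation*}
is invariant. By \cite[Theorem 1]{rikos2022non}, under Assumption~\ref{ass:1} the algorithm terminates in finite time (with probability one). At termination $\|M_i-m_i\|_\infty\le 1$, and the ratios $\chi_i/\xi_i$ all lie in an integer interval containing $\bar q$. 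Hence the output $m_i$ satisfies $m_i = \bar q + e_i$ with $\|e_i\|_\infty\le 1$ componentwise.

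\textbf{Step 2: the first line of \eqref{eq: errorbound}.} Multiplying by $\Delta$ gives $\hat d_i^{[k+1]} = \frac1N\sum_j \Delta\lfloor \phi_j^{[k+1]}/\Delta\rfloor + \kappa_i$, where $\kappa_i = \Delta e_i$ and so $\|\kappa_i\|_\infty\le\Delta$. This is the stated representation; the subscript $\kappa_j$ in the statement is read as the node-dependent offset $\kappa_i$.

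\textbf{Step 3: the second line of \eqref{eq: errorbound}.} The definition of $\phi_j^{[k+1]}$ gives $\frac1N\sum_j \phi_j^{[k+1]} = \sum_j A_jx_j^{[k+1]} - b = d^{[k+1]}$. Componentwise, $\phi_j/\Delta - 1 < \lfloor\phi_j/\Delta\rfloor \le \phi_j/\Delta$, so $\frac1N\sum_j \Delta\lfloor\phi_j/\Delta\rfloor - d^{[k+1]}$ has every entry in $(-\Delta,0]$. Adding $\kappa_i$ bounds each entry of $\hat d_i^{[k+1]}-d^{[k+1]}$ in absolute value by $2\Delta$. Converting the $\infty$-norm to the Euclidean norm on $\mathbb R^m$ then yields $\|\hat d_i^{[k+1]}-d^{[k+1]}\|\le \sqrt m\cdot 2\Delta$.

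\textbf{Main obstacle.} The delicate part is Step 1: getting exactly $\|e_i\|_\infty\le1$ from the stopping rule. This needs two facts.
\begin{enumerate}
\item[(i)] After the max/min consensus over $D$ steps, $M_i$ and $m_i$ equal the network-wide $\max_j\lceil\chi_j/\xi_j\rceil$ and $\min_j\lfloor\chi_j/\xi_j\rfloor$ recorded at the start of the window, which requires strong connectivity and diameter $D$.
\item[(ii)] Because the totals are conserved, $\bar q$ is a weighted average of the ratios $\chi_j/\xi_j$ (weights proportional to $\xi_j$), so $m_i\le \bar q\le M_i\le m_i+1$.
\end{enumerate}
Fact (ii) directly gives $0\le \bar q - m_i\le 1$. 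I would verify it carefully and, if needed, cite the corresponding argument in \cite{rikos2022non}.

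Step 3 can in fact be tightened: since $\bar q - m_i\in[0,1]$, the entries of $\Delta(m_i-\bar q)$ lie in $[-\Delta,0]$, and combined with the floor error in $(-\Delta,0]$ both errors have the same sign, giving at most $2\Delta$ per entry. In any case, the crude triangle-inequality bound already suffices for the claim.
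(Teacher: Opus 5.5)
There is a genuine gap, in the one line of Step 3 that you asserted rather than computed. With the definition $\phi_j^{[k+1]} = N(A_jx_j^{[k+1]}-b)$ used in the statement, $b$ is multiplied by $N$ in each of the $N$ summands, so
\begin{equation*}
\frac1N\sum_{j=1}^N\phi_j^{[k+1]} = \sum_{j=1}^N A_jx_j^{[k+1]} - Nb = d^{[k+1]} - (N-1)b,
\end{equation*}
not $d^{[k+1]}$. Your Steps 1--2 are correct and establish the first line of \eqref{eq: errorbound}. Combined with this identity, however, they give $\|\hat d_i^{[k+1]} - d^{[k+1]} + (N-1)b\| \le 2\sqrt m\Delta$, hence $\|\hat d_i^{[k+1]} - d^{[k+1]}\| \ge (N-1)\|b\| - 2\sqrt m\Delta$.

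The second line of \eqref{eq: errorbound} is therefore false whenever $(N-1)\|b\| > 4\sqrt m\Delta$, so no argument can prove it as written for $b\neq 0$. It does hold when $b=0$, which is the setting of the numerical example. It also holds once the input to Algorithm~\ref{alg:QuAS} is corrected to $\phi_i^{[k+1]} = NA_ix_i^{[k+1]} - b$ (equivalently $N(A_ix_i^{[k+1]}-b_i)$ with $\sum_i b_i = b$). With either fix your argument goes through verbatim.

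The paper's own proof, a bare citation of \cite[Lemma 1]{rikos2023distributed2} and \cite{jiang2021asynchronous}, does not expose this either. Those bounds control the error relative to the average of the inputs. The affine offset $b$ is exactly where the resource-allocation setting departs from consensus-type averaging, so this is the step that needed an explicit check.

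Modulo that correction, your route is a sound, self-contained replacement for the citation. It combines conservation of $\sum_j\chi_j$ and $\sum_j\xi_j$, the fact that $\bar q$ is a $\xi$-weighted convex combination of the start-of-window ratios $\chi_j/\xi_j$, $D$ rounds of max/min consensus, and the stopping rule. Together these give $0\le\bar q - m\le 1$ componentwise. The floor error in $(-\Delta,0]$ and $\|v\|\le\sqrt m\|v\|_\infty$ then finish the bound.

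Two minor remarks:
\begin{enumerate}
\item Every node holds the same global $M$ and $m$ at the end of the final window, so all nodes stop simultaneously with the same output. The offset $\kappa$ is therefore common to all nodes, and the $\kappa_j$ versus $\kappa_i$ reading is immaterial.
\item Your closing ``tightening'' does not shrink the bound. Same-sign errors only tell you each entry of the error lies in $(-2\Delta,0]$, which is one-sided but still of magnitude up to $2\Delta$.
\end{enumerate}
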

\textit{Proof.}  See \cite[Lemma 1]{rikos2023distributed2}, \cite[Appendix]{jiang2021asynchronous} . \hfill$\blacksquare$



\begin{theorem}\label{theorem}
    Let us consider a digraph $\mathcal G= \left( \mathcal V, \mathcal E\right)$. 
Each node $i \in \mathcal V$ has a local cost function $f_i$, and Assumptions~\ref{ass:1} and \ref{ass:2} hold. 
Each node $i \in \mathcal V$ in the network executes Algorithm~\ref{alg: distributed Jacobi-Proximal ADMM} for solving the resource allocation optimization problem in~\eqref{eq: reformulation} in a distributed fashion. 
Given parameter $\rho>0$, 
during the operation of Algorithm~\ref{alg: distributed Jacobi-Proximal ADMM} there always exists a $0<\gamma<2$ and $P_i, \forall i \in \mathcal{V}$ such that 
\begin{equation}\label{eq: condition}\small
    P_i\succ \rho \left( \frac{N}{2-\gamma} -1 \right)\left\|A_i\right\|^2.
\end{equation}
From \eqref{eq: condition}, we have that during the operation of Algorithm~\ref{alg: distributed Jacobi-Proximal ADMM} the following inequality is satisfied 
\begin{equation}\label{eq: result}\small
\begin{split}
 &\mathcal L\left(\frac{\sum_{k=1}^K x^{[k]}}{K},\lambda^*\right) - \mathcal L(x^*,\lambda^*) \\
 \leq	&	\frac{1}{K} \sum_{k=1}^{K} \left( \mathcal L\left(x^{[k]},\lambda^*\right) - \mathcal L(x^*,\lambda^*)\right)\\
 \leq &\frac{C}{K}  + \mathcal O(\Delta),
\end{split}
\end{equation}
where 
\begin{equation}\label{eq: upper bound}\small 
    \begin{split}
        C \coloneqq      \frac{1}{2}\sum_{i=1}^{N}\left\|x_i^{[1]}-x_i^* \right\|^2_{\rho A_i^\top A_i + P_i} + \frac{1}{2\gamma \rho}\left\|\hat \lambda_i^{[1]}-\lambda^* \right\|^2,
    \end{split}
\end{equation}
$\Delta$ denotes the quantization level and $(x^*, \lambda^*)$ is a saddle point of \eqref{eq: reformulation}.
 \end{theorem}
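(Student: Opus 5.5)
The approach is to treat Algorithm~\ref{alg: distributed Jacobi-Proximal ADMM} as an inexact version of the centralized PJ-ADMM \eqref{eq: Proximal Jacobian ADMM}. The error terms will be controlled through the bound \eqref{eq: errorbound}. The existence claim is immediate: for any $\gamma\in(0,2)$, choosing $P_i = \mu_i I$ with $\mu_i$ larger than $\rho(\frac{N}{2-\gamma}-1)\|A_i\|^2$ gives \eqref{eq: condition}. The first inequality in \eqref{eq: result} follows from Jensen's inequality, since $\mathcal L(\cdot,\lambda^*)$ is convex in $x$ under Assumption~\ref{ass:2}.

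\textbf{Perturbations.} Define $e_i^{[k]} \coloneqq \hat d_i^{[k]} - d^{[k]}$, so that $\|e_i^{[k]}\|\le 2\sqrt m\Delta$ by the Lemma. Define also the dual error $\epsilon_i^{[k]}\coloneqq \hat\lambda_i^{[k]}-\lambda^{[k]}$, where $\lambda^{[k+1]}=\lambda^{[k]}+\gamma\rho d^{[k+1]}$ is a virtual centralized multiplier initialized as $\lambda^{[1]}=\hat\lambda_i^{[1]}$. I would take the initializations consistent across nodes; otherwise the constant offset is absorbed into $C$.

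\textbf{Optimality condition.} The first-order condition of \eqref{eq: local uodate} reads
\[
0\in\partial f_i(x_i^{[k+1]}) + P_i(x_i^{[k+1]}-x_i^{[k]}) + \rho A_i^\top\Big(A_i(x_i^{[k+1]}-x_i^{[k]}) + d^{[k]} + \tfrac{\lambda^{[k]}}{\rho}\Big) + A_i^\top r_i^{[k]},
\]
with residual $r_i^{[k]}=\rho e_i^{[k]}+\epsilon_i^{[k]}$. Plugging this subgradient into \eqref{eq: convex} with $x_\beta=x_i^*$, summing over $i$, and using $\sum_i A_ix_i^*=b$, I would reproduce the derivation of \cite[Theorem 2.3]{deng2017parallel}. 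That is, I would write $\mathcal L(x^{[k+1]},\lambda^*)-\mathcal L(x^*,\lambda^*)$ as a difference of the Lyapunov quantity
\[
V^{[k]}=\tfrac12\sum_i\|x_i^{[k]}-x_i^*\|^2_{\rho A_i^\top A_i+P_i}+\tfrac{1}{2\gamma\rho}\|\lambda^{[k]}-\lambda^*\|^2
\]
minus a nonnegative quadratic term. This quadratic term is nonnegative precisely under \eqref{eq: condition}, which is where the condition enters via \cite[Lemma 2.2]{deng2017parallel}. On top of this, there is an extra cross term $\sum_i (r_i^{[k]})^\top A_i(x_i^*-x_i^{[k+1]})$.

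\textbf{Error terms.} Telescoping over $k=1,\dots,K$ and dividing by $K$ gives $C/K$ plus the averaged error terms. These must be shown to be $\mathcal O(\Delta)$ uniformly in $K$. The $\rho e_i^{[k]}$ part is bounded by $2\rho\sqrt m\Delta\|A_i\|\,\|x_i^*-x_i^{[k+1]}\|$. This requires the iterates to stay bounded, which I would obtain from the same Lyapunov inequality: $V^{[k+1]}\le V^{[k]}+c\Delta\sqrt{V^{[k+1]}}$ yields $\sqrt{V^{[k]}}\le\sqrt{V^{[1]}}+ck\Delta$. Alternatively, I would simply assume bounded iterates, as is customary in quantized ADMM analyses.

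\textbf{Main obstacle.} The dual error $\epsilon_i^{[k]}$ accumulates, since $\epsilon_i^{[k+1]}=\epsilon_i^{[k]}+\gamma\rho e_i^{[k+1]}$ and hence can grow linearly in $k$. I would first try to absorb $\hat\lambda_i$ directly into the Lyapunov function, using the averaged dual $\bar\lambda^{[k]}=\frac1N\sum_i\hat\lambda_i^{[k]}$. Because Algorithm~\ref{alg:QuAS} returns the same value $m_i\Delta$ at all nodes upon termination (when $\|M_i-m_i\|_\infty\le1$, the min-consensus value is common), all $\hat d_i^{[k]}$ coincide. Hence the $\hat\lambda_i$ remain identical given a common initialization, and $\epsilon_i^{[k]}$ can be eliminated entirely by defining the virtual dual sequence with $\hat d^{[k]}$ itself. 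Only the bounded per-step error $e^{[k]}$ then appears, and the remaining cross terms are $\mathcal O(\Delta)$ per iteration. This yields $\frac{C}{K}+\mathcal O(\Delta)$ as claimed in \eqref{eq: result}.
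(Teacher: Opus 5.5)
Your route is the paper's. You run the Lyapunov argument of \cite{deng2017parallel} directly on $(x^{[k]},\hat\lambda^{[k]})$ and treat $e^{[k]}=\hat d^{[k]}-d^{[k]}$, $\|e^{[k]}\|\le 2\sqrt m\Delta$, as a per-step perturbation. Your observation that the min-consensus output $m_i\Delta$ is common to all nodes is correct: it makes all $\hat d_i^{[k]}$ coincide, and with a common initialization all $\hat\lambda_i^{[k]}$ coincide too. The paper uses this tacitly when it pulls $\hat\lambda_i$ out of the sums over $i$.

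The step that does not go through is making the error terms $\mathcal O(\Delta)$ \emph{uniformly in $K$}. After you eliminate $\epsilon_i^{[k]}$, the error still enters twice. In the primal step, summing over $i$ first gives $-\rho\,(e^{[k]})^\top d^{[k+1]}$. In the dual step, $\hat\lambda^{[k+1]}-\hat\lambda^{[k]}=\gamma\rho\,(d^{[k+1]}+e^{[k+1]})$, which produces terms of order $\Delta\|\hat\lambda^{[k]}-\lambda^*\|$; these are the paper's terms (b) and (c). Your self-bounding recursion does not control them. The bound $\sqrt{V^{[k]}}\le\sqrt{V^{[1]}}+ck\Delta$ grows linearly in $k$, and inserting it into the telescoped sum yields $\frac{C}{K}+c\Delta\sqrt{V^{[1]}}+\mathcal O(K\Delta^2)$, which gets worse as $K$ grows. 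Boundedness is also not automatic: $\hat\lambda$ accumulates $\gamma\rho e^{[k]}$ at every step. For example, any component of $e^{[k]}$ orthogonal to the range of $[A_1,\dots,A_N]$ is invisible to the primal updates and simply piles up in $\hat\lambda$.

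Your fallback, assuming bounded residuals $d^{[k]}$ and bounded $\|\hat\lambda^{[k]}-\lambda^*\|$, is therefore an extra hypothesis that the theorem does not state. The paper does the same thing. It posits $M_x,M_\lambda$ with $\|\hat d_i^{[k]}\|\le M_x$ and $\|\hat\lambda_i^{[k]}-\lambda^*\|\le M_\lambda$, and builds $\mathcal O(\Delta)$ from them. It justifies their existence by the monotonicity of the \emph{centralized} PJ-ADMM Lyapunov sequence (Lemma~2.1 of \cite{deng2017parallel}), which does not transfer to the quantized iterates. So your proposal is as complete as the paper's argument, but neither actually closes this step. A rigorous version must either state boundedness as an assumption, with the $\mathcal O(\Delta)$ constant depending on those bounds, or add structure that yields it.

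Separately, drop the remark that a non-common dual initialization is ``absorbed into $C$''. Distinct $\hat\lambda_i^{[1]}$ persist as fixed offsets $\delta_i=\hat\lambda_i^{[1]}-\frac{1}{N}\sum_j\hat\lambda_j^{[1]}$ inside $\hat\lambda_i^{[k]}$. Node $i$ then effectively minimizes the tilted objective $f_i(x_i)+\delta_i^\top A_ix_i$, which is an error that does not decay with $K$. A common initialization is genuinely needed, and the paper also assumes it tacitly.
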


\textit{Proof.} See Appendix \ref{appendix: theorem 1}.  \hfill$\blacksquare$

The convergence result in Theorem~\ref{theorem} is inherited from \cite{deng2017parallel}. Based on this theorem, we derive a convergence result for the decision variables $x_i$, as stated in the following corollary.

\begin{corollary}\label{corollary 1}
Under the same conditions as in Theorem~\ref{theorem}, it holds that
\begin{equation}\label{eq: accuracy}\small
\begin{split}
&\lim_{K \rightarrow \infty} \frac{1}{K} \sum_{k=1}^K 
\frac{1}{2} \sum_{i=1}^{N} 
\left\| x_i^{[k]} - x_i^{[k+1]} \right\|^2_{\rho A_i^\top A_i + P_i - \frac{\rho}{\epsilon_i} A_i^\top A_i} \\
\leq &\ \frac{C}{K} + \mathcal O(\Delta),
\end{split}
\end{equation}
where the constant $C$ is defined in \eqref{eq: upper bound}, and each $\epsilon_i > 0$ for $i \in \mathcal{V}$, as shown in Appendix \ref{appendix: theorem 1}.
\end{corollary}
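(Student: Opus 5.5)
We derive the corollary as a by-product of the per-iteration inequality underlying the proof of Theorem~\ref{theorem}, not from \eqref{eq: result} itself, which only bounds the Lagrangian gap. The plan is to keep, in that one-step inequality, the nonnegative terms involving $x_i^{[k]}-x_i^{[k+1]}$ that are dropped when proving \eqref{eq: result}, then sum and telescope.

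First, write the optimality condition of the local subproblem \eqref{eq: local uodate}. There is a subgradient $g_i^{[k+1]}\in\partial f_i(x_i^{[k+1]})$ with
\[
g_i^{[k+1]} + P_i(x_i^{[k+1]}-x_i^{[k]}) + \rho A_i^\top\bigl(A_i(x_i^{[k+1]}-x_i^{[k]}) + \hat d_i^{[k]} + \hat\lambda_i^{[k]}/\rho\bigr)=0 .
\]
Next, write $\hat d_i^{[k]}=d^{[k]}+e_i^{[k]}$, where $\|e_i^{[k]}\|\le 2\sqrt m\Delta$ by the Lemma. Likewise, $\hat\lambda_i^{[k]}$ differs from the ideal centralized multiplier only through accumulated quantization errors, which are again $\mathcal O(\Delta)$ per step. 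Substituting into \eqref{eq: convex} with $x_\alpha=x_i^{[k+1]}$ and $x_\beta=x_i^*$ and summing over $i$ gives a bound on $\mathcal L(x^{[k+1]},\lambda^*)-\mathcal L(x^*,\lambda^*)$. The bound consists of inner products that, via the identity $2(a-b)^\top M(c-a)=\|b-c\|_M^2-\|a-c\|_M^2-\|a-b\|_M^2$, become
\[
\tfrac12\|x^{[k]}-x^*\|^2_{G}-\tfrac12\|x^{[k+1]}-x^*\|^2_{G}+\tfrac{1}{2\gamma\rho}\bigl(\|\hat\lambda^{[k]}-\lambda^*\|^2-\|\hat\lambda^{[k+1]}-\lambda^*\|^2\bigr)-\tfrac12\|x^{[k]}-x^{[k+1]}\|^2_{G},
\]
with $G=\mathrm{blkdiag}(\rho A_i^\top A_i+P_i)$. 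The bound also contains a cross term $\rho\,(A_i(x_i^{[k]}-x_i^{[k+1]}))^\top(\cdot)$ coming from the Jacobian coupling, together with $\mathcal O(\Delta)$ error terms.

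The cross term is the main obstacle. Following \cite[Lemma 2.2]{deng2017parallel}, I would apply Young's inequality with a node-dependent weight $\epsilon_i>0$:
\[
2u^\top v\le \epsilon_i\|v\|^2+\tfrac1{\epsilon_i}\|u\|^2 ,\qquad u=A_i(x_i^{[k]}-x_i^{[k+1]}).
\]
This produces $+\frac{\rho}{\epsilon_i}\|x_i^{[k]}-x_i^{[k+1]}\|^2_{A_i^\top A_i}$, which merges with the negative $G$-term to give exactly the weight $\rho A_i^\top A_i+P_i-\frac{\rho}{\epsilon_i}A_i^\top A_i$. The residual $\epsilon_i\|v\|^2$ (involving $d^{[k+1]}$) is absorbed by the $(2-\gamma)$ surplus from the dual update. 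This absorption is possible precisely because of \eqref{eq: condition}, which makes the weight matrix positive definite for suitable $\epsilon_i$. The quantization-dependent pieces, such as $\rho\|e_i\|\,\|A_i(x_i^{[k+1]}-x_i^*)\|$, are bounded by $\mathcal O(\Delta)$, assuming the iterates stay bounded. Establishing this boundedness is the delicate point, and I would inherit it from the same argument used for Theorem~\ref{theorem}.

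Finally, use $\mathcal L(x^{[k+1]},\lambda^*)-\mathcal L(x^*,\lambda^*)\ge0$, which holds by the saddle-point property, to move the difference-norm term to the left-hand side. Then sum over $k=1,\dots,K$, telescope the $G$- and $\lambda$-terms, and bound what remains by $C$ in \eqref{eq: upper bound}. Dividing by $K$ yields
\[
\frac1K\sum_{k=1}^K\tfrac12\sum_i\|x_i^{[k]}-x_i^{[k+1]}\|^2_{\rho A_i^\top A_i+P_i-\frac{\rho}{\epsilon_i}A_i^\top A_i}\le \frac CK+\mathcal O(\Delta).
\]
Passing $K\to\infty$ then gives \eqref{eq: accuracy}.
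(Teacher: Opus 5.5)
Your proposal is correct and follows essentially the paper's argument. The paper's proof of the corollary sums the per-iteration inequality \eqref{eq: decreasing} from the proof of Theorem~\ref{theorem}, uses $\mathcal L(x^{[k+1]},\lambda^*)-\mathcal L(x^*,\lambda^*)\ge 0$, telescopes to $C$, and keeps only the $x$-difference part of $\Theta$; you instead re-derive that inequality (optimality condition, three-point identity, Young's inequality with weights $\epsilon_i$ and the $(2-\gamma)$ surplus, boundedness of the iterates for the $\mathcal O(\Delta)$ terms) rather than citing it.
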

\textit{Proof.}  See Appendix \ref{Appendix corollary 1}.
 \hfill$\blacksquare$


\begin{remark}
    Under the convexity of the objective functions $f_i$, Algorithm \ref{alg: distributed Jacobi-Proximal ADMM} guarantees that the function values converge to a neighborhood of the optimal value, with the size of this neighborhood ultimately determined by the quantization level $\Delta$. Since $C$ is a constant, the term $\frac{C}{K}$ tends to zero as $K\rightarrow \infty$. This implies that the left-hand side of \eqref{eq: accuracy} is upper bounded by $\mathcal O(\Delta)$. A more rigorous analysis of variable convergence, including the convergence of $\left\|x_i^{[k]} - x_i^*\right\|^2$ and the feasibility of coupling constraints in \eqref{eq: reformulation}, will be addressed in an extended version of this paper.
\end{remark}

\section{Numerical Simulation}\label{sec: numerical}
In this section, we present numerical simulations to illustrate the performance of Algorithm~\ref{alg: distributed Jacobi-Proximal ADMM} and highlight its improvements over existing optimization methods.

Our experiments are based on the setup in \cite[Section 3.7]{deng2017parallel}, with implementation details available at \texttt{https://github.com/ZhiminPeng/Jacobi-ADMM}.
We consider the following resource allocation problem:
\begin{equation}\label{eq: problem}\small \begin{split} \min_{x_i\in \mathbb R^{n_i}, \;\; i\in \mathcal V}\;\; \sum_{i=1}^{N}\frac{1}{2}\|C_i x_i -e_i\|^2\quad \quad\mathrm{s.t.}\;\; \sum_{i=1}^{N}x_i = 0, \end{split} \end{equation}
defined over a random digraph with $N=100$ nodes, where $x_i \in \mathbb{R}^{100}$. Here $C_i \in \mathbb{R}^{120\times 100}$, and $e_i \in \mathbb{R}^{120}$ for all $i \in \mathcal V$ are randomly generated. For further details, please refer to \cite[Section 3.7]{deng2017parallel}.
 We set $\rho = 0.01$, $\gamma = 1$, and $P_i = \tau I \succ 0$ with $\tau = \rho (N -1+ 10^{-3})$ to satisfy the condition \eqref{eq: condition}.
Algorithm~\ref{alg: distributed Jacobi-Proximal ADMM} is executed for quantization levels $\Delta = 10^{-3}, 10^{-4}, 10^{-5}, 10^{-6}$. We evaluate convergence by plotting the error $\sum_{i=1}^N \|x_i^{[k]} - x_i^*\|_1,\; \forall i\in \mathcal V,$ where $x^* = [(x_1^*)^\top, (x_2^*)^\top, \dots, (x_N^*)^\top]^\top$ denotes the optimal solution of \eqref{eq: reformulation}.

\begin{figure}[ht]
	\centering
\includegraphics[width=0.5\textwidth,height=0.27\textheight]{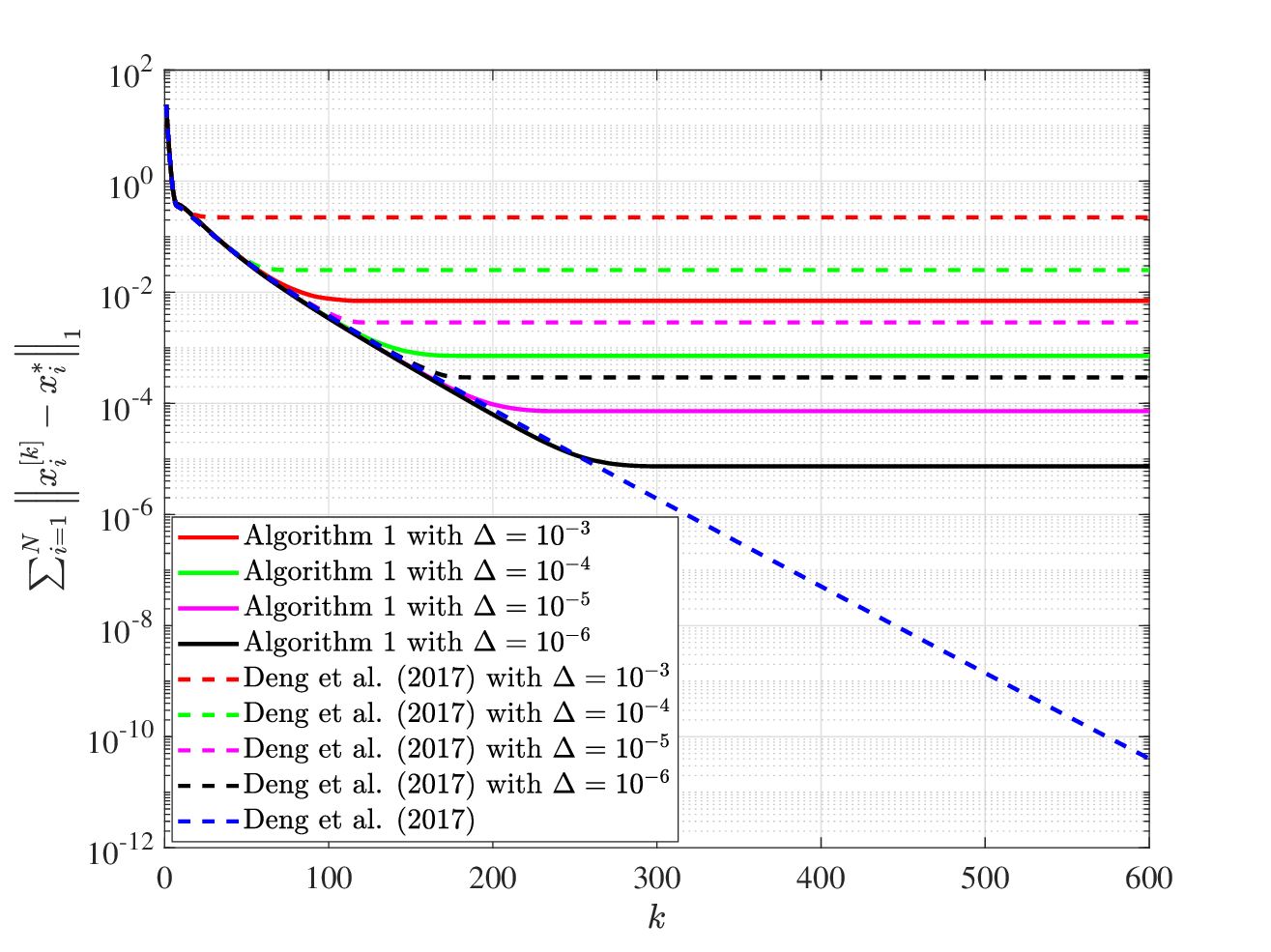}
	\caption{Comparison of Algorithm \ref{alg: distributed Jacobi-Proximal ADMM} 
    with PJ-ADMM (\cite{deng2017parallel}) over a directed graph with quantization level $\Delta = 10^{-3}, 10^{-4}, 10^{-5}$  and $10^{-6}$.}
	\label{fig: comparison}
\end{figure}


From Fig.~\ref{fig: comparison}, it can be observed that Algorithm~\ref{alg: distributed Jacobi-Proximal ADMM} converges to a neighborhood of the optimal solution of \eqref{eq: reformulation}. In particular, a smaller quantization level $\Delta$ yields higher accuracy, which is consistent with the theoretical results in Theorem~\ref{theorem}. This demonstrates that Algorithm~\ref{alg: distributed Jacobi-Proximal ADMM} achieves a tradeoff between communication efficiency and solution accuracy, as a larger $\Delta$ reduces communication bandwidth requirements among nodes. Moreover, the algorithm operates in a fully distributed manner, without relying on a central server.

For comparison, the dashed lines in Fig.~\ref{fig: comparison} show the convergence of the centralized PJ-ADMM~\eqref{eq: Proximal Jacobian ADMM} under different quantization levels $\Delta$, where the last dashed line corresponds to real-valued transmission. Since each iteration of~\eqref{eq: Proximal Jacobian ADMM} requires exchanging both $x_i^{[k+1]}$ and $\lambda^{[k+1]}$, the centralized scheme accumulates larger quantization errors than Algorithm~\ref{alg: distributed Jacobi-Proximal ADMM}.

\section{Conclusion}


In this work, we proposed DQPJ-ADMM, a novel distributed algorithm that computes optimal solutions over directed networks under quantized communication. Operating without a central coordinator, it achieves sublinear convergence to a neighborhood of the optimum for convex resource allocation problems, with accuracy dependent on the quantization level. Theoretical analysis ensures convergence, while numerical tests validate the results and demonstrate improved communication efficiency over existing baselines. Future work will include a rigorous convergence analysis quantifying how the presence of quantization noise affects the accuracy of the final optimal solution under stronger assumptions, along with extending the proposed method to open network environments with dynamic topologies and agent participation.

\appendix
\section{Proof of Theorem \ref{theorem}}\label{appendix: theorem 1}
The proof utilizes the following known results. First, the identity:
\begin{equation}\label{eq: condition3}\small
(a - b)^{\top} (b - c)
= \frac{1}{2} \big( -\|b - c\|^2 - \|a - b\|^2 + \|a - c\|^2 \big).
\end{equation}
Second, we recall Young’s inequality. For any vectors $a, b \in \mathbb{R}^n$ and a constant $\epsilon > 0$, the following holds:
\begin{equation}\label{eq: young}\small
a^\top b \leq \frac{\epsilon}{2} \|a\|^2 + \frac{1}{2\epsilon} \|b\|^2.
\end{equation}
Next, we establish a pivotal relationship for the dual variables in Algorithm~\ref{alg: distributed Jacobi-Proximal ADMM} applied to \eqref{eq: reformulation}. For the convenience of subsequent analysis, we introduce the intermediate dual variable $\tilde{\lambda}_i^{[k+1]} \coloneqq \hat{\lambda}_i^{[k]} + \rho \hat{d}_i^{[k+1]}$. This variable connects to the relaxed dual update $\hat{\lambda}_i$ in \eqref{eq: dual of Jacobian ADMM} through the relaxation parameter $\gamma$, as demonstrated by the following derivation:
\begin{equation}\label{eq: dual jacobian ADMM2}\small
	\begin{split}
		&\tilde{\lambda}_i^{[k+1]} - \lambda^*\\=&\left(\tilde{\lambda}_i^{[k+1]} - \hat \lambda_i^{[k+1]}  \right) + \left(\hat{\lambda}_i^{[k+1]} -  \lambda^*  \right)\\
		=& \left(\hat \lambda_i^{[k]}+\rho \hat d_i^{[k+1]} -  \hat \lambda_i^{[k]} - \gamma\rho \hat d_i^{[k+1]}  \right) + \left(\hat{\lambda}_i^{[k+1]} -  \lambda^* \right)\\
		\overset{\eqref{eq: dual of Jacobian ADMM}}{=}& \frac{\gamma-1}{\gamma}\left(\hat \lambda_i^{[k]} -\hat\lambda_i^{[k+1]} \right) + \left( \hat \lambda_i^{[k+1]} -\lambda^*\right).
	\end{split}
\end{equation}

We introduce an auxiliary function \cite[Appendix, Proof of inequality (A.2)]{boyd2011distributed}:
\begin{equation}\small
\begin{split}
F_i(\xi) \coloneqq\ & f_i(\xi) 
+ \Big(A_i^\top \hat \lambda_i^{[k]} 
+ \rho A_i^\top \big(A_i(x_i^{[k+1]} - x_i^{[k]}) + \hat d_i^{[k]} \big) \\
& + P_i (x_i^{[k+1]} - x_i^{[k]}) \Big)^\top \xi.
\end{split}
\end{equation}

By the optimality condition of convex functions, we have $\sum_{i=1}^N F_i(x_i^{[k+1]}) - F_i(x_i^*) \overset{\eqref{eq: convex}}{\leq} 0,$
which implies
\begin{equation}\label{eq: begin}\small
\begin{split}
&\sum_{i=1}^N f_i\left(x_i^{[k+1]}\right) - f_i(x_i^*) \\\leq & 
\sum_{i=1}^{N} \left(x_i^* - x_i^{[k+1]}\right)^\top \left(A_i^\top \hat \lambda_i^{[k]} + \hat d_i^{[k]}\right) \\
& + \sum_{i=1}^{N} \left(x_i^* - x_i^{[k+1]}\right)^\top \rho A_i^\top A_i \left(x_i^{[k+1]} - x_i^{[k]}\right) \\
& + \sum_{i=1}^{N} \left(x_i^* - x_i^{[k+1]}\right)^\top P_i \left(x_i^{[k+1]} - x_i^{[k]}\right).
\end{split}
\end{equation}

Adding $(\lambda^*)^\top \sum_{i=1}^{N} A_i (x_i^{[k+1]} - x_i^*)$ to both sides and using \eqref{eq: condition3}, \eqref{eq: begin} can be compactly rewritten as
\begin{equation}\label{eq: center}\small
\begin{split}
0 \leq &\ \mathcal L\left(x^{[k+1]},\lambda^*\right) - \mathcal L(x^*,\lambda^*) \\
\leq &\ \Phi\left(x^{[k+1]},x^{[k]},x^*\right) \\
& + \left(\sum_{i=1}^{N} A_i \left(x_i^* - x_i^{[k+1]}\right) \right)^\top 
      \left (\hat \lambda_i^{[k]} - \lambda^* + \rho \hat d_i^{[k]}\right),
\end{split}
\end{equation}
where
\begin{equation}\label{eq: primal equation}\small
\begin{split}
&\Phi\left(x^{[k+1]},x^{[k]},x^*\right)\\ \coloneqq\ & 
\frac{1}{2} \sum_{i=1}^{N} \left\|x_i^{[k]} - x_i^*\right\|^2_{\rho A_i^\top A_i + P_i}  - \frac{1}{2} \sum_{i=1}^{N} \left\|x_i^{[k+1]} - x_i^*\right\|^2_{\rho A_i^\top A_i + P_i} \\
& - \frac{1}{2} \sum_{i=1}^{N} \left\|x_i^{[k+1]} - x_i^{[k]}\right\|^2_{\rho A_i^\top A_i + P_i}.
\end{split}
\end{equation}
Note that the non-negativity in the first line of \eqref{eq: center} follows directly from the saddle point theorem.

The last term in \eqref{eq: center} can be further bounded as
\begin{equation}\label{eq: decouple}\small
\begin{split}
& \left(\sum_{i=1}^{N} A_i (x_i^* - x_i^{[k+1]}) \right)^\top 
  \big(\hat \lambda_i^{[k]} - \lambda^* + \rho \hat d_i^{[k]}\big) \\
\overset{\eqref{eq: dual of Jacobian ADMM},\eqref{eq: errorbound},\eqref{eq: dual jacobian ADMM2}}{\leq} & 
\underbrace{\left( \frac{\hat \lambda_i^{[k]} - \hat \lambda_i^{[k+1]}}{\gamma \rho} \right)^\top 
\big(\tilde \lambda_i^{[k+1]} - \lambda^* \big)}_{\text{(a)}} \\
& + \underbrace{\rho \left( \frac{\hat \lambda_i^{[k]} - \hat \lambda_i^{[k+1]}}{\gamma \rho} \right)^\top 
\big(\hat d_i^{[k]} - \hat d_i^{[k+1]} \big)}_{\text{(b)}} \\
& + \underbrace{2\sqrt{m}\Delta \left\| \tilde \lambda_i^{[k+1]} - \lambda^* + \rho \left(\hat d_i^{[k]} - \hat d_i^{[k+1]}\right) \right\|}_{\text{(c)}}.
\end{split}
\end{equation}

We now decompose the right-hand side of \eqref{eq: decouple} into three components, denoted as (a), (b), and (c), and analyze each component separately.

First, component (a) can be expressed as
\begin{equation}\label{eq: a}\small
\begin{split}
& \left( \frac{\hat \lambda_i^{[k]} - \hat \lambda_i^{[k+1]}}{\gamma \rho} \right)^\top 
\left(\tilde \lambda_i^{[k+1]} - \lambda^*\right) \\
\overset{\eqref{eq: dual jacobian ADMM2}}{=} &
\left( \frac{\hat \lambda_i^{[k]} - \hat \lambda_i^{[k+1]}}{\gamma \rho} \right)^\top\\
&\left( \frac{\gamma - 1}{\gamma} \left(\hat \lambda_i^{[k]} - \hat \lambda_i^{[k+1]}\right)
+ \left(\hat \lambda_i^{[k+1]} - \lambda^*\right) \right) \\
\overset{\eqref{eq: condition3}}{=} & \frac{\gamma - 1}{\gamma^2 \rho} 
\left\|\hat \lambda_i^{[k]} - \hat \lambda_i^{[k+1]}\right\|^2 
+ \Psi\left(\hat \lambda^{[k+1]}_i, \hat \lambda^{[k]}_i, \lambda^*\right),
\end{split}
\end{equation}
where
\begin{equation}\label{eq: dual equation}\small
\begin{split}
\Psi\left(\hat \lambda^{[k+1]}_i, \hat \lambda^{[k]}_i, \lambda^*\right) 
\coloneqq &\ \frac{1}{2\gamma \rho} \Big(
\left\|\hat \lambda_i^{[k]} - \lambda^*\right\|^2 
- \left\|\hat \lambda_i^{[k+1]} - \hat \lambda_i^{[k]}\right\|^2 \\
&\quad - \left\|\hat \lambda_i^{[k+1]} - \lambda^*\right\|^2 \Big).
\end{split}
\end{equation}

Second, component (b) can be written as
\begin{equation}\label{eq: b1}\small
\begin{split}
& \left( \frac{\hat \lambda_i^{[k]} - \hat \lambda_i^{[k+1]}}{\gamma} \right)^\top 
\left(\hat d_i^{[k]} - \hat d_i^{[k+1]}\right) \\
\overset{\eqref{eq: constraint update}, \eqref{eq: errorbound}}{\leq} &
\left( \frac{\hat \lambda_i^{[k]} - \hat \lambda_i^{[k+1]}}{\gamma} \right)^\top
\left( \sum_{i=1}^{N} A_i (x_i^{[k]} - x_i^{[k+1]}) \right) \\
&\quad + \frac{4\sqrt{m}\Delta}{\gamma} 
\left\|\hat \lambda_i^{[k]} - \hat \lambda_i^{[k+1]}\right\|.
\end{split}
\end{equation}
By applying \eqref{eq: young}, the right hand side of inequality  \eqref{eq: b1} can be further bounded as
\begin{equation}\label{eq: b}\small
\begin{split}
 &
\frac{1}{2} \Big( \sum_{i=1}^{N} \epsilon_i \Big)
\frac{1}{\gamma^2 \rho} 
\left\|\hat \lambda_i^{[k]} - \hat \lambda_i^{[k+1]}\right\|^2 \\
& + \frac{1}{2} \sum_{i=1}^{N} 
\frac{\rho}{\epsilon_i} 
\left\|x_i^{[k]} - x_i^{[k+1]}\right\|^2_{A_i^\top A_i} \\
& + \frac{4\sqrt{m}\Delta}{\gamma} 
\left(\left \|\hat \lambda_i^{[k]} - \lambda^*\right\| 
+ \left\|\hat \lambda_i^{[k+1]} - \lambda^*\right\| \right).
\end{split}
\end{equation}

Finally, component (c) can be expressed as
\begin{equation}\label{eq: c}\small
\begin{split}
& 2\sqrt{m}\Delta 
\left\| \tilde \lambda_i^{[k+1]} - \lambda^* 
+ \rho (\hat d_i^{[k]} - \hat d_i^{[k+1]}) \right\| \\
\overset{\eqref{eq: dual jacobian ADMM2}}{\leq} & 2\sqrt{m}\Delta \frac{|\gamma-1|}{\gamma}\left (\left\|\hat \lambda_i^{[k]} -\lambda^* \right\|+ \left\|\hat\lambda_i^{[k+1]} - \lambda^*\right\| \right) \\
&\hspace{-5mm}+ 2\sqrt{m}\Delta\left\| \hat \lambda_i^{[k+1]} -\lambda^*\right\|+2\rho \sqrt{m}\Delta  \left(\left\|  \hat d_i^{[k]}\right\| + \left\|\hat d_i^{[k+1]} \right\|\right). 
\end{split}
\end{equation}

As a summary, by summing up \eqref{eq: a},\eqref{eq: b} and \eqref{eq: c}, the last term in \eqref{eq: center} can be bounded as
\begin{equation}\label{eq: splitting sum}\small
\begin{split}
& \left(\sum_{i=1}^{N} A_i \left(x_i^* - x_i^{[k+1]}\right) \right)^\top
\left(\hat \lambda_i^{[k]} - \lambda^* + \rho \hat d_i^{[k]}\right)\\
&\hspace{-7mm}\overset{\eqref{eq: a},\eqref{eq: b},\eqref{eq: c}}{\leq} \Gamma^{[k]},
\end{split}
\end{equation}
where $\Gamma^{[k]}$ denotes the summation of the right-hand sides of \eqref{eq: a},\eqref{eq: b} and \eqref{eq: c}.

Moreover, according to \cite[Lemma~2.1]{deng2017parallel}, the sequence $\sum_{i=1}^N \|x_i^{[k]} - x_i^* \|_{P_i+\rho A_i^\top A_i}^2 
+ \frac{1}{\gamma \rho}\|\lambda^{[k]} - \lambda^*\|^2$
is monotonically decreasing under \eqref{eq: Proximal Jacobian ADMM}. 
Since the initial values $\lambda_i^{[1]}$, $x_i^{[1]}$, and the optimal solutions $x_i^*$ for all $i \in \mathcal V$ are bounded, and the quantization level $\Delta$ is fixed and finite, both $\|\hat d_i^{[k]}\|$ and $\|\hat\lambda_i^{[k]} - \lambda^*\|$ remain bounded.  
Hence, there exist constants $M_x>0$ and $M_\lambda>0$ such that $\|\hat d_i^{[k]}\| \leq M_x, \quad \|\hat\lambda_i^{[k]} - \lambda^*\| \leq M_\lambda,$
as similarly stated in \cite[Page~8]{jiang2021asynchronous}.  

To quantify the impact of quantization in \eqref{eq: b} and \eqref{eq: c}, we define
\begin{equation}\label{eq: error bound}\small
\begin{split}
    \mathcal O(\Delta) \coloneqq 
\left(\frac{8\Delta}{\gamma} + 4\Delta \frac{|\gamma-1|}{\gamma}+ 2\Delta \right)\sqrt{m} M_\lambda 
+ 4 \rho \sqrt{m}\Delta M_x.
\end{split}
\end{equation}

Substituting \eqref{eq: splitting sum} and \eqref{eq: error bound} into \eqref{eq: center} yields  
\begin{equation}\label{eq: relaxed0}\small
	\begin{split}
		&0 \leq \mathcal L\left(x^{[k+1]},\lambda^*\right) - \mathcal L(x^*,\lambda^*)\\
		\overset{\eqref{eq: center}, \eqref{eq: splitting sum} ,\eqref{eq: error bound}}{\leq} &
        \left\{ \sum_{i=1}^N \left\|x_i^{[k]} - x_i^* \right\|_{P_i+\rho A_i^\top A_i}^2 
+ \frac{1}{\gamma \rho}\left\|\hat\lambda^{[k]}_i - \lambda^*\right\|^2\right\}\\
 &\hspace{-15mm}- \left\{\sum_{i=1}^N \left\|x_i^{[k+1]} - x_i^* \right\|_{P_i+\rho A_i^\top A_i}^2 
+ \frac{1}{\gamma \rho}\left\|\hat\lambda^{[k+1]}_i - \lambda^*\right\|^2\right\}\\
 &\hspace{-15mm}- \left\{\sum_{i=1}^N\left \|x_i^{[k+1]} - x_i^{[k]} \right\|_{P_i+\rho A_i^\top A_i}^2 +\frac{1}{\gamma \rho}\left\|\hat\lambda^{[k+1]}_i - \hat\lambda^{[k]}_i\right\|^2\right\}\\
		&\hspace{-15mm} + \left( \frac{1}{2}\left(\sum_{i=1}^{N}\!\epsilon_i\right)\frac{1}{\gamma^2\rho} + \frac{\gamma-1}{\gamma^2\rho}\right)
   \left \|\hat \lambda_i^{[k]} - \hat \lambda_i^{[k+1]} \right\|^2 
     \\
        &\hspace{-15mm} + \frac{1}{2}\sum_{i=1}^{N} 
        \frac{\rho}{\epsilon_i} 
      \left  \|x_i^{[k]}-x_i^{[k+1]}\right\|^2_{A_i^\top A_i}+ \mathcal O (\Delta). 
	\end{split}
\end{equation}

By collecting  the terms involving 
$\left\|\hat \lambda_i^{[k]}-\hat \lambda_i^{[k+1]}\right\|^2$  
and $\left\|x_i^{[k]}-x_i^{[k+1]}\right\|^2$ in \eqref{eq: relaxed0}, we define the auxiliary term  
\begin{equation}\label{eq: auxi}\small
    \begin{split}
        &\Theta(\psi^{[k+1]},\psi^{[k]})\\
        \coloneqq&
        \left(\frac{1}{2\gamma\rho} - \frac{\gamma-1}{\rho \gamma^2} 
        - \frac{1}{2}\left(\sum_{i=1}^{N}\epsilon_i\right)\frac{1}{\gamma^2\rho}\right)
       \left \|\hat \lambda_i^{[k]}-\hat \lambda_i^{[k+1]}\right \|^2\\
        &\quad + 
        \frac{1}{2}\sum_{i=1}^{N} 
      \left  \|x_i^{[k]} - x_i^{[k+1]}\right \|^2_{\rho A_i^\top A_i 
        + P_i - \frac{\rho}{\epsilon_i}A_i^\top A_i}.
    \end{split}
\end{equation}
Since condition \eqref{eq: condition} holds, it follows that 
$\Theta(\psi^{[k+1]},\psi^{[k]}) \geq 0$.  
This result is analogous to that in \cite[Lemma~2.2]{deng2017parallel}.

Finally, substituting \eqref{eq: auxi} into \eqref{eq: relaxed0} yields
\begin{equation} \label{eq: decreasing}\small
	\begin{split}
		0 &\leq \mathcal L\left(x^{[k+1]},\lambda^*\right) - \mathcal L(x^*,\lambda^*)\\
		&\overset{\eqref{eq: auxi}}{\leq}
		\Bigg\{
    \frac{1}{2}\sum_{i=1}^{N}
   \left \|x_i^{[k]}-x_i^* \right\|^2_{\rho A_i^\top A_i + P_i} 
    + \frac{1}{2\gamma \rho}\left\|\hat \lambda_i^{[k]}-\lambda^* \right\|^2 
    \Bigg\}\\
         -&
    \Bigg\{
    \frac{1}{2}\sum_{i=1}^{N}
    \left\|x_i^{[k+1]}-x_i^* \right\|^2_{\rho A_i^\top A_i + P_i} 
    + \frac{1}{2\gamma \rho}\left\|\hat \lambda_i^{[k+1]}-\lambda^* \right\|^2 
    \Bigg\}\\
		&\quad - \Theta\left(\psi^{[k+1]},\psi^{[k]}\right) + \mathcal O(\Delta).
 	\end{split}
\end{equation}
Under Assumption~\ref{ass:2}, applying a telescoping summation to \eqref{eq: decreasing} directly yields \eqref{eq: result}.
 \hfill$\blacksquare$

\section{Proof of Corollary \ref{corollary 1}}\label{Appendix corollary 1}
Summing \eqref{eq: decreasing} over $k=1,\dots,K$ yields
\begin{equation}\label{eq: sumTheta}\small
\begin{split}
        \frac{1}{K} \sum_{k=1}^K \Theta\left(\psi^{[k+1]},\psi^{[k]}\right) 
    \leq \frac{C}{K} + \mathcal O(\Delta).
\end{split}
\end{equation}

Taking the limit as $K \rightarrow \infty$ and noting that
\begin{equation}\small
\begin{split}
     \frac{1}{2} \sum_{i=1}^{N}\left \|x_i^{[k]} - x_i^{[k+1]}\right\|^2_{\rho A_i^\top A_i + P_i - \frac{\rho}{\epsilon_i} A_i^\top A_i} 
    \leq \Theta\left(\psi^{[k+1]},\psi^{[k]}\right),
\end{split}
\end{equation}
from \eqref{eq: auxi}, the desired result \eqref{eq: accuracy} follows.

\bibliography{ifacconf}             
\end{document}